%
%
%

\documentclass[graybox]{svmult}


\usepackage{mathptmx}       
\usepackage{helvet}         
\usepackage{courier}        
\usepackage{type1cm}        
%
\usepackage{makeidx}         
\usepackage{graphicx}        
\usepackage{multicol}        
\usepackage[bottom]{footmisc}


\newtheorem{defi}{Definition}
\newtheorem{alg}{Algorithm}

\newtheorem{thm}{Theorem}
\newtheorem{lem}{Lemma}
\newtheorem{cor}{Corrolary}

\makeindex             


\begin{document}

\title*{An Algorithm for Constructing  Hjelmslev Planes}
\author{Joanne L. Hall and Asha Rao}
\institute{Joanne L. Hall \at  School of Mathematical Sciences, 
Queensland University of Technology. \email{j42.hall@qut.edu.au}
\and Asha Rao \at School of Mathematical and Geospatial Sciences, 
 RMIT University. \email{asha@rmit.edu.au}}
%
%
\maketitle

\abstract{Projective Hjelmslev planes and affine Hjelmslev planes  are  generalisations of projective planes and affine planes. We present an algorithm for constructing   projective Hjelmslev planes and affine Hjelmslev planes  that uses projective planes, affine planes and  orthogonal arrays. We show that all $2$-uniform projective Hjelmslev planes, and all $2$-uniform affine Hjelmslev planes can be constructed in this way. As a corollary it is shown that all $2$-uniform affine Hjelmslev planes are sub-geometries of $2$-uniform projective Hjelmslev planes.
}
\section{Introduction}

In 1916, Hjelmslev  introduced the concept of a projective Hjelmslev  geometry as a ``\emph{geometry of reality}'' \cite{Hje16}, introducing the intriguing concept  of point and line neighbourhoods, a  property  that varied the restriction that  two points lie on exactly one line. However it was the 1950s before Klingenberg first formally defined Hjelmslev planes \cite{kling54}.  More work describing these geometries was done in the 60s and 70s by Drake and others \cite{drake70,drake74,craig64}.

Recently there has been renewed interest in these structures with Honold and Landjev \cite{HL01} showing a connection with linear codes and Saniga and Planat \cite{SP06} conjecturing that there may be connections to mutually unbiased bases. A difficulty  turns out to be explicit constructions and concrete examples, which would help further investigations. Some Hjelmslev planes can be constructed using chain rings \cite{Veld95}, those that have been the most thoroughly investigated have been the Galois ring Hjelmslev geometries \cite{HL01,KKK2011,KZ2013}, though there has been some more  recent work using other chain rings \cite{HK2013A,HK2013}.  

Note that just as  there are affine planes and projective planes which cannot be constructed via a Galois field, similarly there are Hjelmslev planes which cannot be constructed using a chain ring. Thus, having a general algorithm which generates Hjelmslev planes regardless of their algebraic structure, would be useful. In this paper we give such an algorithm for constructing projective Hjelmslev planes, using a projective plane, an affine plane and an orthogonal array. This algorithm is easily implemented using any programming language and generates a Hjelmslev plane for use in applications.  Indeed it has been implemented using Python and in the conclusion, we provide a link to a  visualisation. 

There are open online lookup tables (eg. \cite{designtheory,sloaneOA}) for the structures needed for this construction, or some may be constructed using a Galois field \cite[\S VII.2]{CRC}. 
Drake and Shult \cite[Prop 2.4]{drake76} show that all Hjelmslev planes can be constructed using a projective plane and semi-nets with zings, however there is no library of semi-nets with zings. While our algorithm \ref{alg:complete} for constructing Projective Hjelmslev planes is implied by the proof of \cite[Thm 1]{HVM1989}, we present it here in algorithmic language.  In addition the algorithm \ref{alg:affine} for constructing affine Hjelmslev planes is new.

 All affine planes are sub-geometries of projective planes, however there exist affine Hjelmslev planes which are not sub-geometries of projective Hjelmslev planes {\bf \cite{Bacon74}}.  A result of the presented algorithms is a new proof showing that all $2$-uniform affine Hjelmslev planes are sub-geometries of projective Hjelmslev planes.

 While the major drawback of this algorithm is the large number (factorial in the order of the projective plane) of isomorphic Hjelmslev planes which are generated via different paths, the algorithm is still useful in providing examples and allowing for visualization of these beautiful, but intricate structures. The authors have been as yet unable to come up with a way to address the problem of Hjelmslev isomorphisms. A 1989 paper of Hanssens and Van Maldeghm \cite{HVM1989} gives some results regarding isomorphisms of 3- and higher uniform Hjelmslev planes, but the same cannot be said for 2-uniform Hjelmslev planes which seems to be a more difficult problem.

The paper is organised in the following manner. Section \ref{def} gives definitions and facts about projective Hjelmslev planes while Section \ref{ingredients} describes the mathematical objects required for the construction.  Sections \ref{sec:algorithm} and \ref{sec:affine}   detail algorithms for constructing $2$-uniform projective Hjelmslev planes and $2$-uniform affine Hjelmslev planes.   Section \ref{fab} discusses some properties of the algorithms and  concludes with some further directions.

\section{Definitions \label{def}}

  Hjelmslev planes are generalisations of projective and affine planes, see for example \cite{Dem97,CRC}.  We write $P\in h$, to indicate that the point $P$ is incident with line $h$. A line may also be represented as the set of points incident with it.  In Hjelmslev geometry,  lines (or  points) may have the relationship of being neighbours, denoted $g\sim h$ (or $P\sim Q$).
\begin{defi}\label{PH}\cite{drake74}
 A projective Hjelmslev plane, $\mathcal{H}$, is an incidence structure such that:
\begin{enumerate}
 \item \label{points} any two points are incident with at least one line.
 \item \label{lines} any two lines intersect in at least one point.
\item \label{nlines} any two lines $g$, $h$, that intersect at more than one point are neighbours, denoted $g\sim h$.
\item \label{npoints} any two points $P$ and $Q$ that are incident with more that one line are neighbours, denoted $P\sim Q$.
\item \label{epi} there exists an epimorphism $\phi$ from $\mathcal{H}$ to an ordinary projective plane $\mathcal{P}$ such that for any points $P,Q$ and lines $g,h$ of $\mathcal{H}$:
\begin{enumerate}
\item $\phi(P)=\phi(Q)\iff P\sim Q$,
\item $\phi(g)=\phi(h) \iff g\sim h$.
\end{enumerate}
\end{enumerate}
\end{defi}
The neighbour property is an equivalence relation, so the set of lines of $\mathcal{H}$ is partitioned into line-neighbourhoods and the set of points of $\mathcal{H}$ is partitioned into point-neighbourhoods.  A projective Hjelmslev plane is denoted by $(t,r)PH$-plane where $t$ is the number of neighbouring points on each line, and $r$ is the order of the projective plane associated  by the epimorphism $\phi$.  Furthermore it is known that $r=s/t$ where $t+s$ is the number of points on each line, each line-neighbourhood has $t^2$ lines and each point-neighbourhood has  $t^2$ points \cite
{drake74,Klein59}.   Just as in projective planes, points and lines are dual.   A   $(1,r)PH$-plane is a projective plane of order $r$.  This notation should  not be confused with $PH(R)$, the projective Hjelmslev plane over the ring $R$, or $PH(n,p^r)$ where $R=GR(p^n,r)$ \cite{SP06,HL01}.

\begin{defi}\label{defi:AH}\cite{drake74} An affine Hjelmslev plane $\mathcal{H}$ is an incidence structure such that
\begin{enumerate}
\item \label{apoints} any two points are incident with at least one line.
\item \label{anlines} any two lines $g$, $h$ that intersect at more than one point are neighbours, denoted $g\sim h$.
\item \label{aepi} There exists an epimorphism $\phi$ from $\mathcal{H}$ to an ordinary affine plane $\mathcal{A}$  such that for any points $P,Q$ and lines $g,h$ of $\mathcal{H}$:
\begin{enumerate}
\item $\phi(P)=\phi(Q)\iff P\sim Q$,
\item $\phi(g)=\phi(h)\iff g\sim h$,
\item $|g\cap h|=0\Rightarrow \phi(g)\parallel \phi(h)$.
\end{enumerate}
\end{enumerate}
\end{defi}
An affine Hjelmslev plane may be denoted $(t,r)AH$-plane, where $t$ is the number of neighbouring points on each line, and $r=s/t$ where $s$ is the number of points on each line.  An affine Hjelmslev plane may have parallel lines which are neighbours as condition $3c$ of Definition \ref{defi:AH} is a one way implication.

  Hjelmslev planes are  mentioned in some books on finite geometry \cite[\S 7.2]{Dem97}, but not in the more standard works on design theory eg. \cite{CRC,BJL99}.
Hjelmslev planes have a rich structure with several interesting substructures.  The following function allows interrogation of the structure of each point neighbourhood.

\begin{defi}\cite[Defi 2.3]{drake70} Let $P$ be a point of a Hjelmslev plane $\mathcal{H}$.   The point-neighbourhood restriction, $\bar{P}$, is defined as follows:
\begin{enumerate}
\item the points of $\bar{P}$  are the points $Q$ of $\mathcal{H}$ such that $Q\sim P$.
\item the lines of $\bar{P}$ are the restrictions of lines $g$ of $\mathcal{H}$ to the points in $\bar{P}$: $g_P=g\cap \bar{P}$.
\end{enumerate}
\end{defi}

\begin{defi}\label{uniform} \cite[Defi 2.4]{drake70} A 1-uniform projective (affine) Hjelmslev plane $\mathcal{H}$ is an ordinary  projective (affine) plane.
A projective (affine) Hjelmslev plane is $n$-uniform if
\begin{enumerate}
 \item for every point $P\in\mathcal{H}$,  $\bar{P}$ is an $(n-1)$ uniform affine Hjelmslev plane.
 \item for each point $P$ of $\mathcal{H}$,   every line of $\bar{P}$ is the restriction of the same number of lines of $\mathcal{H}$.
\end{enumerate}
\end{defi}
In a $2$-uniform projective Hjelmslev plane every point-neighbourhood restriction is an ordinary affine plane.  It is known that a projective Hjelmslev plane is $2$-uniform if and only if it is a $(t,t)PH$- plane, similarly an affine Hjelmslev plane is $2$-uniform if and only if it is a $(t,t)AH$- plane \cite{Klein59}. All $(t,t)PH$- planes that can be generated by rings have been catalogued \cite{HL01}.  However there are many more Hjelmslev planes that cannot be generated by rings.

\section{\label{ingredients}The Objects Required for the Construction}
We present an algorithm for generating a $2$-uniform projective Hjelmslev plane.  This algorithm is inspired by a construction by  Drake and Shult \cite[Prop 2.4]{drake76}  and a construction by Hanssens and VanMaldeghem \cite{HVM1989}.  The construction of Drake and Schult  uses a projective plane and a semi-net with zings;  semi-nets have not been well catalogued, making Drake and Shult's construction difficult to implement. The construction of  Hanssens and Van Maldeghem is not given in algorithmic language.    The algorithms  given in section \ref{sec:algorithm} and \ref{sec:affine}  are restricted to $(t,t)PH$- planes and $(t,t)AH$- planes, but use objects that are well studied, and catalogued.
The algorithms takes three different classes of combinatorial structures and use them to generate  $(t,t)PH$- planes and $(t,t)AH$- planes.  Examples and properties of the combinatorial structures can be found in any standard work on combinatorial designs eg. \cite{CRC,BJL99}.

\begin{defi}\cite[VII.2.2]{CRC}\label{projectiveplane}A \emph{projective plane} is a set of points and lines such that
\begin{itemize}
\item
any two distinct points are incident with exactly one line.
\item
any two distinct lines intersect at exactly one point.
\item
there exist four points no three of which are on a common line.
\end{itemize}\end{defi}
A projective plane of order $m$ has $m+1$ points on each line, $m+1$ lines through each point, $m^2+m+1$ points and $m^2+m+1$ lines.  A projective plane of order  $m$ may be represented as a $2-(m^2+m+1,m+1,1)$ block design  \cite[Defi 2.9]{BJL99}.

\begin{defi}\cite[VII.2.11]{CRC}
A \emph{finite affine plane} is a set of points and lines such that
\begin{itemize}
\item
any two distinct points are incident with exactly one line.
\item
for any point $P$ not on line $l$ there is exactly one line through $P$ that is parallel (has no points in common) with $l$.
\item
there exist three points not on a common line.
\end{itemize}
\end{defi}

An affine plane of order $m$ has $m$ points on a line, $m+1$ lines through each point, $m^2$ points and $m^2+m$ lines.  An affine plane can be obtained from a projective plane by removing one line and all the points on that line.  An affine plane of order $m$ may be represented as a $2-(m^2,m,1)$ block design \cite[Defi 2.9]{BJL99}. An affine plane of order $m$  may be partitioned into $\parallel$-classes, with each $\parallel$-class containing $m$ lines.

$\parallel$-classes $S$ and $T$ are orthogonal if each line of $S$ has exactly one element in common with each line of $T$.  An affine plane of order $m$ has $m+1$ mutually orthogonal $\parallel$-classes.

\begin{defi}\cite[III.3.5]{CRC}
An \emph{orthogonal array} $OA(t,k,v)$ is a $v^2\times k$ array with entries from a set of $v$ symbols, such that in any $t$ columns each ordered $t-tuple$ occurs exactly once.
\end{defi}
Each symbol occurs in each column of the orthogonal array $v$ times. An orthogonal array may be obtained from an affine plane by assigning each point to a row, and each $\parallel$-class to a column of the array.  The symbol in position $i,j$ of the array indicates the line of $\parallel$-class $j$ that is incident with point $i$.   Alternately this structure may be considered as the dual of an affine plane.  Thus a catalogue of affine planes also provides a catalogue of the appropriate orthogonal arrays.

The three structures above may all be generated from a projective plane.  However, it is not essential in the following algorithms that the objects have any relationship other than size.
The construction of Drake and Shult \cite{drake76} uses the incidence matrix of a projective plane, for which each symbol is replaced by a sub-matrix  which meets a set of conditions given by a semi-net with zings.  The algorithm developed here is related to this method.

\section{\label{sec:algorithm} An Algorithm for Constructing  $2-$uniform Projective Hjelmslev Planes}

This algorithm is implicit in the proof of \cite[Thm 1]{HVM1989}.

\begin{alg}\label{alg:complete}
 To  create the structure $\mathcal{H}$:
\begin{enumerate}
\item
Let $\mathcal{P}$ be a projective plane of order $m$, Let  $\mathcal{A}_0,\mathcal{A}_1,\dots,\mathcal{A}_{m^2+m}$be a list of  affine planes of order $m$ and $\mathcal{O}_0,\mathcal{O}_1,\dots,\mathcal{O}_{m^2+m}$ be a list of  orthogonal arrays $OA(2,m+1,m)$.   Note that some (or all) of the affine planes and some (or all) of  orthogonal arrays may be the same.
\item
For each point of $\mathcal{P}$, replace point $P$  with  $m^2$ points which are a copy of $\mathcal{A}_{P}$. This gives $(m^2+m+1)m^2$ points in $\mathcal{H}$. Each affine plane will now be called a point-neighbourhood of $\mathcal{H}$.
\item
Choose a line $g=\{P_0,P_2,\dots ,P_m\},$ in $\mathcal{P}$,  and for each point of $g$  choose a parallel class of each  of $\mathcal{A}_{P_0},\mathcal{A}_{P_1},\dots ,\mathcal{A}_{P_m}$.
Label each of the lines of the parallel class of each point-neighbourhood with the symbols from $\mathcal{O}_g$. Since each point-neighbourhood is in $m$ lines of $\mathcal{P}$, each time a particular point $P$ of $\mathcal{P}$ is in a chosen line, a different parallel class of $\mathcal{A}_P$ must be used.  Label each column of $\mathcal{O}_g$ with a point of $g$.
\item
Each line of $\mathcal{H}$ is constructed by reading a row of $\mathcal{O}_g$ and selecting the points which correspond to the lines of the the point neighbourhoods.
\end{enumerate}
Repeat for each line of $\mathcal{P}$.
\end{alg}

An example is given  below.

Step 1.  A Projective plane of order $3$, an Affine plane of order $3$ and an orthogonal array $OA(2,4,3)$.  In this example $\mathcal{A}_i=\mathcal{A}$ and $\mathcal{O}_i=\mathcal{O}$ for all $0\leq i\leq m^2+m$.

$$\mathcal{P}=\left\{\begin{array}{l}
\{0,1,2,9\},\\
\{3,4,5,9\},\\
\{6,7,8,9\},\\
\{0,3,6,A\},\\
\{1,4,7,A\},\\
\{2,5,8,A\},\\
\{0,4,8,B\},\\
\{1,5,6,B\},\\
\{2,3,7,B\},\\
\{0,7,5,C\},\\
\{1,3,8,C\},\\
\{2,4,6,C\},\\
\{9,A,B,C\},
\end{array}\right\}
\quad\quad
\mathcal{A}=\left\{\begin{array}{l}
\{R,S,T\},\\
\{U,V,W\},\\
\{X,Y,Z\},\\
\{R,U,X\},\\
\{S,V,Y\},\\
\{T,W,Z\},\\
\{R,V,Z\},\\
\{S,W,X\},\\
\{T,U,Y\},\\
\{R,W,Y\},\\
\{S,U,Z\},\\
\{T,V,X\}
\end{array}\right\}
\quad\quad
\mathcal{O}=\begin{array}{cccc}
L & L & L & L \\
L & M & M & M \\
L & N & N & N \\
M & L & M & N \\
M & M & N & L \\
M & N & L & M \\
N & L & N & M \\
N & M & L & N \\
N & N & M & L
     \end{array}$$

Step 2. The points of $\mathcal{H}$ can be written as a Cartesian product of the set of points in $\mathcal{A}$ and $\mathcal{P}$. 
\[
 \{(0,R), (0,S), (0,T), (0,U), (0,V), (0,W), (0,X),  \dots, (C,X), (C,Y), (C,Z)\}.
\]

Step 3.  Choosing line $g=\{3,4,5,9\}$, the chosen $\parallel$-classes of each point neighbourhood of $g$, and the labels for the columns of $\mathcal{O}$.

\begin{center}In neighbourhood 3; $L:= \{ R,  S,  T\}$,
 $M := \{ U , V , W\}$,
 $N := \{ X , Y , Z\}$.

In neighbourhood 4; $L :=\{R, S, T\}$,
     $M :=\{U, V, W\}$,
     $N :=\{X, Y, Z\}$.

In neighbourhood 5;  $L :=\{R, S, T\}$,
     $M:=\{U, V, W\}$,
     $N:=\{X, Y, Z\}$.

In neighbourhood 9; $L :=\{R, U, X\}$,
     $M:=\{S, V, Y\}$,
     $N:=\{T, W, Z\}$.
\end{center}
$$\begin{array}{cccc}
3 & 4 & 5 & 9\\
\hline
L & L & L & L \\
L & M & M & M \\
L & N & N & N \\
M & L & M & N \\
M & M & N & L \\
M & N & L & M \\
N & L & N & M \\
N & M & L & N \\
N & N & M & L
     \end{array}$$

Step 4.  The lines of $\mathcal{H}$ in the line-neighbourhoods corresponding to the line $\{3,4,5,9\}$  of $\mathcal{P}$ are constructed according to $\mathcal{O}$. Note that every pair of lines from within a line neighbourhood share exactly 3 points.

\[
\begin{array}{cccccccccccc}
\{(3,R), (3,S), (3,T),&  (4,R), (4,S), (4,T),&  (5,R), (5,S), (5,T),&  (9,R), (9,U), (9,X)\}\\
\{(3,R), (3,S), (3,T),&  (4,U), (4,V), (4,W),&  (5,U), (5,V), (5,W),&  (9,S), (9,V), (9,Y)\}\\
\{(3,R), (3,S), (3,T),&  (4,X), (4,Y), (4,Z),&  (5,X), (5,Y), (5,Z),&  (9,T), (9,W), (9,Z)\}\\
\{(3,U), (3,V), (3,W),&  (4,R), (4,S), (4,T),&  (5,U), (5,V), (5,W),&  (9,T), (9,W), (9,Z)\}\\
\{(3,U), (3,V), (3,W),&  (4,U), (4,V), (4,X),&  (5,X), (5,Y), (5,Z),&  (9,R), (9,U), (9,X)\}\\
\{(3,U), (3,V), (3,W),&  (4,X), (4,Y), (4,Z),&  (5,R), (5,S), (5,T),&  (9,S), (9,V), (9,Y)\}\\
\{(3,X), (3,Y), (3,Z),&  (4,R), (4,S), (4,T),&  (5,X), (5,Y), (5,Z),&  (9,S), (9,V), (9,Y)\}\\
\{(3,X), (3,Y), (3,Z),&  (4,U), (4,V), (4,X), & (5,R), (5,S), (5,T),&  (9,T), (9,W), (9,Z)\}\\
\{(3,X), (3,Y), (3,Z),&  (4,X), (4,Y), (4,Z),&  (5,U), (5,V), (5,W),&  (9,R), (9,U), (9,X)\}\\
\end{array}
\]

\begin{thm} \label{thm:proof}  The structure generated by Algorithm \ref{alg:complete} is a 2-uniform $(m,m)PH$-plane.
\end{thm}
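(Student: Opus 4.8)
The plan is to realise $\mathcal{H}$ concretely, build the epimorphism onto $\mathcal{P}$ by hand, and then verify each clause of Definition \ref{PH} together with $2$-uniformity by case analysis. Write every point of $\mathcal{H}$ as a pair $(P,a)$ with $P$ a point of $\mathcal{P}$ and $a$ a point of $\mathcal{A}_P$, and every line as a pair $(l,\rho)$ with $l$ a line of $\mathcal{P}$ and $\rho$ a row of $\mathcal{O}_l$, incidence being: $(P_i,a)\in(l,\rho)$ iff $P_i\in l$ and $a$ lies on the affine line of $\mathcal{A}_{P_i}$ labelled by the entry of $\rho$ in the column named $P_i$, that line lying in the parallel class of $\mathcal{A}_{P_i}$ chosen for $(P_i,l)$ in Step 3. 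I would first record two bookkeeping facts. (i) Since $\mathcal{P}$ has $m+1$ lines through each point and $\mathcal{A}_P$ has $m+1$ parallel classes, Step 3 sets up a \emph{bijection} between the lines of $\mathcal{P}$ through $P$ and the parallel classes of $\mathcal{A}_P$; as we run over all lines of $\mathcal{P}$, every parallel class of every $\mathcal{A}_P$ gets used exactly once. (ii) An $OA(2,m+1,m)$ is equivalent to an affine plane of order $m$ (rows as points, columns as parallel classes), so any two distinct rows of $\mathcal{O}_l$ agree in \emph{exactly one} column. I would also note that distinct pairs $(l,\rho)$ give distinct point-sets, so the lines of $\mathcal{H}$ are well defined. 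Now set $\phi(P,a)=P$, $\phi(l,\rho)=l$, and \emph{define} $P\sim Q:\iff\phi(P)=\phi(Q)$ and $g\sim h:\iff\phi(g)=\phi(h)$; this is visibly an equivalence relation whose point-classes are the copies of $\mathcal{A}_P$ and whose line-classes are the blocks of $m^2$ lines arising from each line of $\mathcal{P}$.

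The incidence axioms then reduce to small case analyses. For Definition \ref{PH}(\ref{points}): if $P\neq Q$, the unique line $l=PQ$, the unique affine lines through $a$ and through $b$ in the classes chosen for $(P,l)$ and $(Q,l)$, and the strength-$2$ property of $\mathcal{O}_l$ pin down exactly one row $\rho$, hence exactly one line through both points; if $P=Q$ and $a\neq b$, the line $ab$ of $\mathcal{A}_P$ lies in one parallel class, which by (i) is the chosen class of a unique line $l_0$ through $P$, and the $m$ rows of $\mathcal{O}_{l_0}$ carrying the label of $ab$ in column $P$ give exactly $m$ common lines (in particular at least one). This simultaneously yields (\ref{npoints}): two points on more than one common line must have equal first coordinate, hence are neighbours. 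For (\ref{lines}): two lines in different $\phi$-classes, $(l,\rho)$ and $(l',\rho')$ with $l\cap l'=\{P\}$, restrict over first coordinate $P$ to two affine lines of $\mathcal{A}_P$ in \emph{different} parallel classes, which meet in exactly one point, so the two lines meet in exactly one point; two lines in the same $\phi$-class, $(l,\rho)$ and $(l,\rho')$ with $\rho\neq\rho'$, agree by (ii) in exactly one column $P$ and disagree in the others, so meet in exactly the $m$ points lying over that $P$. This gives (\ref{nlines}): more than one common point forces equal $\phi$-class, hence neighbours. Finally $\phi$ is a surjective incidence-preserving map onto the projective plane $\mathcal{P}$, and (a),(b) of (\ref{epi}) hold by the definition of $\sim$, so $\mathcal{H}$ is a projective Hjelmslev plane; counting shows each line carries $m+1$ neighbour-clusters of $m$ points and $\mathcal{P}$ has order $m$, so $\mathcal{H}$ is an $(m,m)PH$-plane.

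For $2$-uniformity I would argue directly from Definition \ref{uniform}. Fix a point $X=(P,a)$; the points of $\bar X$ are exactly the $(P,b)$ with $b\in\mathcal{A}_P$, and the restriction to $\bar X$ of a line $(l,\rho)$ is empty unless $P\in l$, in which case it is the affine line of $\mathcal{A}_P$ in the class chosen for $(P,l)$ with the label given by $\rho$ in column $P$. Letting $l$ range over the $m+1$ lines of $\mathcal{P}$ through $P$ and the labels range over the $m$ symbols, fact (i) shows these restrictions are precisely \emph{all} the lines of $\mathcal{A}_P$; hence $\bar X\cong\mathcal{A}_P$, an ordinary affine plane, i.e. a $1$-uniform affine Hjelmslev plane, which is Definition \ref{uniform}(1). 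For Definition \ref{uniform}(2), a fixed line of $\bar X$ corresponds to an affine line $\ell$ in a parallel class $C$; only lines $(l_0,\rho)$ whose $\phi$-class is the line $l_0$ for which $C$ is the chosen class can restrict to $\ell$, and among those exactly the $m$ rows carrying the label of $\ell$ in column $P$ do so. Thus every line of $\bar X$ is the restriction of exactly $m$ lines of $\mathcal{H}$, a constant, so $\mathcal{H}$ is $2$-uniform (alternatively one may invoke \cite{Klein59} once $\mathcal{H}$ is known to be an $(m,m)PH$-plane).

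The main obstacle, and the point where the particular ingredients are genuinely used, is clause (\ref{lines}) for two lines inside one line-neighbourhood: it is exactly fact (ii) — that the orthogonal array is \emph{complete}, having the maximal $m+1$ columns, equivalently that it \emph{is} an affine plane of order $m$ — which forces any two such lines to agree in a column and hence actually to meet; a non-maximal $OA(2,k,m)$ with $k<m+1$ would produce disjoint ``lines'' and destroy the axioms. The bijection of fact (i) is the other load-bearing ingredient, underpinning both the ``exactly one common line'' count in (\ref{points}) and the identification $\bar X\cong\mathcal{A}_P$; everything else is routine case-checking and counting of neighbourhood sizes.
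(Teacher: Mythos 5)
Your proposal is correct and follows the same overall skeleton as the paper's proof: the same case split (points in the same versus different point-neighbourhoods, lines in the same versus different line-neighbourhoods) and the same epimorphism obtained by collapsing neighbourhoods. Where you genuinely add something is in isolating the two load-bearing lemmas that the paper leaves implicit. Your fact (ii) --- that any two distinct rows of a \emph{complete} $OA(2,m+1,m)$ agree in exactly one column, by the standard counting argument $(m+1)(m-1)=m^2-1$ --- is precisely what justifies the paper's bare assertion that ``$\mathcal{O}$ ensures that lines in the same line neighbourhood meet in exactly one line of a $\parallel$-class,'' and your observation that a non-maximal $OA(2,k,m)$ with $k<m+1$ would break axiom \ref{lines} correctly identifies why the column count matters. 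Likewise your fact (i), the bijection between the $m+1$ lines of $\mathcal{P}$ through $P$ and the $m+1$ parallel classes of $\mathcal{A}_P$, is what makes the ``different parallel class each time'' instruction of Step 3 do its work in both axiom \ref{points} and the identification $\bar{X}\cong\mathcal{A}_P$. The one place you take a different route is $2$-uniformity: the paper reads it off from the parameters via Kleinfeld's characterisation of $(t,t)$-PH planes, whereas you verify Definition \ref{uniform} directly (each $\bar{X}$ is an affine plane and each of its lines is the restriction of exactly $m$ lines of $\mathcal{H}$); your version is more self-contained and also delivers condition 2 of Definition \ref{uniform} explicitly, which the parameter argument only gives by citation.
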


 \begin{proof}
This algorithm generates an incidence structure $\mathcal{H}$ with $(m^2+m+1)m^2$ points, $(m^2+m+1)m^2$ lines,  each line containing $(m^2+m)$ points, and each point incident with $(m^2+m)$ lines.

Axioms \ref{points} and \ref{npoints}:  Any pair of points $P$ and $Q$ which are in the same point-neighbourhood are incident with exactly one line of the point neighbourhood restriction, which is an affine plane.  Each line of the point neighbourhood restriction is used in $m$ lines of $\mathcal{H}$, as each symbol appears $m$ times in each column of $\mathcal{O}$.  For points $P$ and $R$ which  are in different point-neighbourhoods, there is exactly one line of $\mathcal{P}$ which is incident with any pair of point-neighbourhoods. Given $\parallel$-classes $\bar{P}_X$ and $\bar{R}_Y$ of each point-neighbourhood, $\mathcal{O}$ ensures that each line of $\bar{P}_X$ is in a line of $\mathcal{H}$ with each line of $\bar{R}_Y$ exactly once.

Axioms \ref{lines} and \ref{nlines}: $\mathcal{O}$ ensures that lines in the same line neighbourhood intersect in exactly one line of a $\parallel$-class of a point-neighbourhood, which is $m$ points.  For lines $g$ and $h$ which are in different line-neighbourhoods, their line-neighbourhoods may be labled with lines from $\mathcal{P}$.  Any pair of lines in $\mathcal{P}$ intersect in exactly one point, thus any line-neighbourhoods of $\mathcal{H}$ intersect in exactly one point neighbourhood $\bar{Q}$.  Each line-neighbourhood is allocated a different $\parallel$-class $\bar{Q}_X$, $\bar{Q}_Y$.  Thus the line $g$ in $\mathcal{H}$ must contain a line of $\bar{Q}_X$ and $h$ a line of $\bar{Q}_Y$.  As the $\parallel$-classes are orthogonal, $g$ and $h$ intersect in exactly one point.

Let $\phi$ collapse point-neighbourhoods  and line neighbourhoods.  It is trivial to check that this is incidence preserving and surjective, and thus an epimorphism.

All axioms of Definition \ref{PH} are satisfied.  Thus $\mathcal{H}$ is a projective Hjelmslev plane.

The cardinalities of $\mathcal{H}$ show that i$\mathcal{H}$ is a $(m,m)PH$-plane and is therefore $2$-uniform.
\end{proof}

In the example the affine plane used is a sub-geometry of the given projective plane.  However this is not required and any affine planes of the appropriate size may be used.  Any projective plane, any affine planes and any orthogonal arrays of the appropriate sizes may be used.

\begin{thm}
 All $2$-uniform projective Hjelmslev planes can be generated using Algorithm \ref{alg:complete}.
\end{thm}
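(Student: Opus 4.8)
The plan is to reverse-engineer the three ingredients of Algorithm \ref{alg:complete} from a given $2$-uniform projective Hjelmslev plane $\mathcal{H}$, and then check that feeding them back into the algorithm reproduces $\mathcal{H}$ up to isomorphism. Since $\mathcal{H}$ is $2$-uniform it is an $(m,m)PH$-plane for some $m$, so Definition \ref{PH} supplies an epimorphism $\phi\colon\mathcal{H}\to\mathcal{P}$ onto a projective plane $\mathcal{P}$ of order $m$; take this $\mathcal{P}$ as the input projective plane. For each point $P$ of $\mathcal{P}$ the preimage $\phi^{-1}(P)$ is a point-neighbourhood, and by $2$-uniformity the point-neighbourhood restriction $\bar P$ is an ordinary affine plane of order $m$; take $\mathcal{A}_P=\bar P$. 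The cardinality facts recalled after Definition \ref{PH} give the bookkeeping I will lean on: each line of $\mathcal{H}$ has $m+m^{2}$ points, distributed $m$ to each of the $m+1$ point-neighbourhoods it meets; each line-neighbourhood has $m^{2}$ lines; each point-neighbourhood has $m^{2}$ points; and, from Definition \ref{uniform}(2), every affine line of $\bar P$ is the restriction $g\cap\bar P$ of the same number $\lambda$ of lines of $\mathcal{H}$, which a flag count forces to be $\lambda=m$.

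Next I would analyse a single line-neighbourhood $\bar l=\phi^{-1}(l)$ for a line $l=\{P_{0},\dots,P_{m}\}$ of $\mathcal{P}$. For $g\in\bar l$ the set $g\cap\bar{P_i}$ has $m$ points and so is a full line $g_{P_i}$ of $\bar{P_i}$, and $g=\bigcup_i g_{P_i}$, so $g\mapsto(g_{P_0},\dots,g_{P_m})$ is injective on $\bar l$. The crucial structural claim is that for each $i$ the set $\Sigma_i:=\{g_{P_i}:g\in\bar l\}$ is a parallel class of $\bar{P_i}$, and that for $i\ne j$ the map $g\mapsto(g_{P_i},g_{P_j})$ is a \emph{bijection} $\bar l\to\Sigma_i\times\Sigma_j$. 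The engine is that two points of $\mathcal{H}$ lying in distinct point-neighbourhoods are, by Axioms \ref{points} and \ref{npoints}, incident with exactly one line. This shows first that all $m$ lines of $\mathcal{H}$ restricting to a fixed affine line of $\bar{P_i}$ lie in one line-neighbourhood (any two of them share at least $m\ge2$ points, hence are neighbours), so $\bar l$ accounts for exactly $m$ affine lines in each $\bar{P_i}$, i.e.\ $|\Sigma_i|=m$; and second that no two distinct lines of $\bar l$ can share affine lines in two different $\bar{P_i}$ (a common point in each would give two lines through a non-neighbour pair). An averaging count over the $m^{2}-1$ lines $h\in\bar l\setminus\{g\}$, using $(m+1)(m-1)=m^{2}-1$, then forces each such $h$ to agree with $g$ in exactly one coordinate; consequently distinct lines of $\bar l$ meet in exactly $m$ points lying inside a single point-neighbourhood. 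Disjointness of the members of $\Sigma_i$ (hence ``$\Sigma_i$ is a parallel class'') and bijectivity of $g\mapsto(g_{P_i},g_{P_j})$ both drop out of this. Fixing, for each $i$, a labelling of the $m$ lines of $\Sigma_i$ by $m$ symbols, the array with rows indexed by $\bar l$ and column $i$ recording the label of $g_{P_i}$ is then an $OA(2,m+1,m)$; take it as $\mathcal{O}_l$, and take the $\Sigma_i$ as the chosen parallel classes.

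Finally I would verify that Algorithm \ref{alg:complete}, run on $\mathcal{P}$, the planes $\mathcal{A}_P=\bar P$, the arrays $\mathcal{O}_l$ and these parallel-class choices, returns $\mathcal{H}$. The point set produced is the union of the point sets of the $\bar P$, which is exactly the point set of $\mathcal{H}$ since point-neighbourhoods partition it; and each row of $\mathcal{O}_l$ reconstitutes, as the union of the labelled affine lines across $P_{0},\dots,P_{m}$, precisely a line $g=\bigcup_i g_{P_i}$ of $\bar l$, the correspondence between rows and lines of $\bar l$ being a bijection by the orthogonal-array property just established. One should also observe that the $m+1$ parallel classes $\Sigma_i$ coming from the $m+1$ line-neighbourhoods through $\bar P$ partition the $m^{2}+m$ lines of $\bar P$ into $m+1$ pairwise-disjoint families of $m$ lines, hence are exactly the $m+1$ distinct parallel classes of the affine plane $\bar P$, so the algorithm's demand that a different parallel class be used each time $P$ recurs is automatically satisfiable. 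Thus the incidence structure output by the algorithm is isomorphic to $\mathcal{H}$. I expect the main obstacle to be the middle step --- showing that a line-neighbourhood cuts genuine parallel classes in the point-neighbourhoods it meets and that its lines interlock exactly as the rows of an orthogonal array --- since everything else is either definitional or a straightforward count; the heart of the matter is organising the uniqueness of lines through non-neighbour pairs (Axioms \ref{points}, \ref{npoints}) together with the uniformity hypothesis of Definition \ref{uniform} into that combinatorial conclusion.
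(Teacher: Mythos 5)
Your proposal is correct, and it takes a genuinely different --- and substantially more complete --- route than the paper. The paper's own proof of this theorem only checks that the \emph{output} of Algorithm \ref{alg:complete} satisfies the two conditions of Definition \ref{uniform} (point-neighbourhood restrictions are affine planes; each affine line is the restriction of equally many lines of $\mathcal{H}$, guaranteed by the column-regularity of the orthogonal array); it never addresses the surjectivity claim that is actually being asserted, implicitly deferring that to the cited construction of Hanssens and Van Maldeghem. You instead prove the stated theorem directly by reverse-engineering the inputs from an arbitrary $2$-uniform plane: $\mathcal{P}$ from the epimorphism, the $\mathcal{A}_P$ from the point-neighbourhood restrictions, and $\mathcal{O}_l$ from the interlocking of a line-neighbourhood with the point-neighbourhoods it meets. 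Your middle step is the real content and it holds up: the facts that (i) the $m$ lines of $\mathcal{H}$ restricting to a fixed affine line of $\bar{P_i}$ all lie in one line-neighbourhood, (ii) two lines of $\bar l$ cannot agree in two coordinates (Axiom \ref{npoints} applied to a non-neighbour pair), and (iii) the count $(m+1)(m-1)=m^{2}-1$ matching $|\bar l\setminus\{g\}|$ together force ``agree in exactly one coordinate,'' from which the parallel-class and $OA(2,m+1,m)$ structure follow by pure counting. What your approach buys is an actual proof of the ``all'' in the statement, plus the bonus observation that the $m+1$ line-neighbourhoods through $\bar P$ induce exactly the $m+1$ parallel classes of $\bar P$, which is what makes Step 3 of the algorithm realisable; what the paper's approach buys is only the (already needed) complement to Theorem \ref{thm:proof} that the construction never leaves the class of $2$-uniform planes. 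Two small points you should make explicit: that every line of $\bar l$ meets all $m+1$ point-neighbourhoods over $l$ in exactly $m$ points each (this is the parameter fact $t+s=t(r+1)$ quoted after Definition \ref{PH}, not something to be taken silently), and that lines of $\mathcal{H}$ are determined by their point sets so that $g\mapsto(g_{P_0},\dots,g_{P_m})$ is genuinely injective.
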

\begin{proof}
From Theorem \ref{thm:proof},   $\mathcal{H}$ is a $2$-uniform projective Hjelmslev plane.
Axiom 1 of Definition \ref{uniform} requires that point-neighbourhood restrictions are  affine  planes.  This is guaranteed by step 2.  Requiring that every line of every $\bar{P}$ is the restriction of the same number of lines is equivalent to ensuring that each line  of each parallel class of the point-neighbourhood is  included in the same number of lines at step 4.  This is ensured as each symbol occurs in each column of an orthogonal array the same number of times.
\end{proof}

\section{An Algorithm for Constructing $2$-uniform Affine Hjelmslev Planes  \label{sec:affine}}
Algorithm \ref{alg:complete} may be amended to construct $2$-uniform affine Hjelmslev planes

\begin{alg}\label{alg:affine}
 To  create the structure $\mathcal{H}$:
\begin{enumerate}
\item
Let $\mathcal{A}$ be a affine plane of order $m$, Let  $\mathcal{A}_0,\mathcal{A}_1,\dots,\mathcal{A}_{m^2}$be a list of affine planes of order $m$ and $\mathcal{O}_0,\mathcal{O}_1,\dots,\mathcal{O}_{m^2+m}$ be a list of orthogonal arrays $OA(2,m,m)$.   Note that some (or all) of the affine planes and orthogonal arrays may be the same.
\item
For each point of $\mathcal{A}$, replace point $P$  with  $m^2$ points which are a copy of $\mathcal{A}_{P}$. This gives $m^4$ points in $\mathcal{H}$. Each affine plane will now be called a point-neighbourhood of $\mathcal{H}$.
\item
Choose a line $g=\{P_0,P_2,\dots, P_{m-1}\},$ in $\mathcal{H}$,  and for each point of $g$  choose a parallel class of each  of $\mathcal{A}_{P_0},\mathcal{A}_{P_1},\dots, \mathcal{A}_{P_{m-1}}$.
Label each of the lines of the parallel class of each point-neighbourhood with the symbols from $\mathcal{O}_g$. Since each point-neighbourhood is in $m$ lines of $\mathcal{P}$, each time a particular point $P$ of $\mathcal{P}$ is in a chosen line, a different parallel class of $\mathcal{A}_P$ must be used.  Label each column of $\mathcal{O}_g$ with a point of $g$.
\item 
Each line of $\mathcal{A}$ is constructed by reading a row of $\mathcal{O}_g$ and selecting the points which correspond to the lines of the the point neighbourhoods.
\end{enumerate}
Repeat for each line of $\mathcal{H}$.
\end{alg}

\begin{thm}\label{thm:affineproof} All $2$-uniform affine Hjelmslev planes can be generated using  Algorithm \ref{alg:affine}.
\end{thm}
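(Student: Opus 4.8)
The plan is to establish both implications packed into the statement: that Algorithm~\ref{alg:affine} always produces a $2$-uniform affine Hjelmslev plane, and, conversely, that every such plane is obtained from it for a suitable choice of inputs. The first implication is a transcription of the proof of Theorem~\ref{thm:proof} with the projective plane $\mathcal{P}$ replaced by the affine plane $\mathcal{A}$ and $OA(2,m+1,m)$ by $OA(2,m,m)$. Counting gives $m^4$ points, $m^4+m^3$ lines, $m^2$ points on each line and $m^2+m$ lines through each point, so the output is an $(m,m)$-AH plane and hence $2$-uniform by \cite{Klein59}. Axiom~\ref{apoints} and axiom~\ref{anlines} go through as before, using the affine-plane structure of each point-neighbourhood, the strength-$2$ property of each $\mathcal{O}_l$, and the mutual orthogonality of the parallel classes of each $\mathcal{A}_P$ that get used (all $m+1$ of them, since there are $m+1$ lines of $\mathcal{A}$ through a point). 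The collapsing map $\phi$ onto $\mathcal{A}$ is again a surjective incidence-preserving morphism, and the only genuinely new verification is the parallelism clause $|g\cap h|=0\Rightarrow\phi(g)\parallel\phi(h)$ of axiom~\ref{aepi}: if $g,h$ are disjoint then either they share a line-neighbourhood, so $\phi(g)=\phi(h)$, or their images are distinct lines of $\mathcal{A}$ that cannot meet, for if $\phi(g)\cap\phi(h)$ were a point $Q_0$ then $g$ and $h$ would carry lines of two distinct, hence orthogonal, parallel classes of $\mathcal{A}_{Q_0}$ and would therefore meet. The one feature with no projective analogue is that an $OA(2,m,m)$, unlike an $OA(2,m+1,m)$, has pairs of rows agreeing in no column; these produce disjoint pairs of neighbouring lines, exactly the parallel neighbouring lines that the affine axioms permit.

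For the converse, let $\mathcal{H}$ be a $2$-uniform affine Hjelmslev plane; by \cite{Klein59} it is a $(t,t)$-AH plane, and we put $m=t$. Take $\mathcal{A}:=\phi(\mathcal{H})$, an affine plane of order $m$; for each point $P$ of $\mathcal{A}$ set $\mathcal{A}_P:=\bar Q$ for any $Q\in\phi^{-1}(P)$, which by Definition~\ref{uniform} is a $1$-uniform affine Hjelmslev plane, i.e.\ an ordinary affine plane, necessarily of order $m$; and for each line $l$ of $\mathcal{A}$ let $\tilde l$ be the line-neighbourhood of $\mathcal{H}$ over $l$, a set of $m^2$ lines. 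The recovery of the construction rests on three facts about the way $\tilde l$ lies over the $m$ point-neighbourhoods $\mathcal{A}_P$, $P\in l$: (A) each $g\in\tilde l$ restricts on each such $\mathcal{A}_P$ to a full line $g_P$ of that affine plane, and $\{g_P:g\in\tilde l\}$ is a single parallel class of $\mathcal{A}_P$; (B) the $m+1$ lines of $\mathcal{A}$ through a fixed point induce the $m+1$ distinct, pairwise orthogonal parallel classes of the corresponding $\mathcal{A}_P$; and (C) the map $g\mapsto(g_P)_{P\in l}$ exhibits $\tilde l$ as an $m^2\times m$ array on $m$ symbols of strength $2$, i.e.\ an $OA(2,m,m)$ --- equivalently, two distinct lines of $\tilde l$ never agree on two of the point-neighbourhoods over $l$. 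Taking $\mathcal{O}_l$ to be that array, and using that a line of $\mathcal{H}$ is the union of its restrictions to the point-neighbourhoods over its image, one checks that Algorithm~\ref{alg:affine} run on the inputs $\mathcal{A}$, $\{\mathcal{A}_P\}$, $\{\mathcal{O}_l\}$ reproduces $\mathcal{H}$.

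The main obstacle is (A) and (C). Fact (B) then follows from axiom~\ref{anlines} and the parallelism clause of axiom~\ref{aepi}: a line of $\tilde l$ and a line of $\tilde{l'}$ with $l\cap l'$ a single point of $\mathcal{A}$ cannot be disjoint (else the clause would force their images to be parallel) and cannot meet in more than one point (else they would be neighbours, forcing $l=l'$), so they meet in exactly one point, which is orthogonality of the induced classes. For (A) and (C) I would use the second clause of Definition~\ref{uniform} --- every line of every $\bar Q$ is the restriction of the same number of lines of $\mathcal{H}$ --- together with an incidence count inside $\phi^{-1}(l)$, which has $m^3$ points carrying $m^2$ lines of $\tilde l$ of $m^2$ points each, to pin that common number at $m$. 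The difficulty is that (A), (C), and the uniqueness of the line joining two non-neighbour points of $\mathcal{H}$ form an interlocking package --- a line of $\tilde l$ restricting to a \emph{full} line in each neighbourhood above $l$, those restrictions being pairwise parallel, and no two lines of $\tilde l$ coinciding on two such neighbourhoods --- none of which is evidently prior to the rest. Establishing this package from $2$-uniformity and axioms \ref{apoints}, \ref{anlines}, \ref{aepi} alone, rather than from the conclusion, is the real content of the proof; once it is in hand the reconstruction described above is routine.
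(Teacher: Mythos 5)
Your first paragraph (that Algorithm~\ref{alg:affine} outputs a $2$-uniform AH-plane) is sound and is essentially what the paper intends: the paper omits the proof of this theorem entirely, saying only that it is ``similar to that of Theorem~\ref{thm:proof}'', and your adaptation --- including the verification of the parallelism clause of the epimorphism axiom and the observation that two rows of an $OA(2,m,m)$ may disagree in every column, producing disjoint neighbouring lines --- is more careful than anything the paper writes down. The problem is the converse, which is what the theorem actually asserts. You correctly identify that everything hinges on the package you label (A), (B), (C): that in a $(t,t)$-AH plane each line $g$ of a line-neighbourhood $\tilde l$ restricts to a full line of the affine plane $\bar P$ for every point-neighbourhood over $\phi(g)$, that the restrictions $\{g_P : g\in\tilde l\}$ form a single parallel class of $\bar P$, and that two distinct lines of $\tilde l$ never agree on two of these neighbourhoods (which is exactly the strength-$2$ property giving $\mathcal{O}_l$). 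But you then explicitly defer these facts, calling them ``the real content of the proof''. As written, the universality direction is therefore not proven: the reconstruction of the inputs $\mathcal{A}$, $\{\mathcal{A}_P\}$, $\{\mathcal{O}_l\}$ is only legitimate once (A) and (C) are established, and the counting sketch you gesture at (the ``same number of restrictions'' clause of Definition~\ref{uniform} plus an incidence count over $\phi^{-1}(l)$) does not by itself exclude, say, two lines of $\tilde l$ sharing their restrictions on two neighbourhoods, or restrictions from one line-neighbourhood that cross rather than form a parallel class; note that Definition of the AH-plane used here has no dual of the projective axiom~\ref{npoints}, so uniqueness of joins of non-neighbour points is itself something to be argued, not assumed.

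In fairness, the paper gives no proof here either, and even its Theorem~2 for the projective case only re-verifies that the algorithm's output is $2$-uniform, with the genuine universality statement resting on \cite{HVM1989}; so you have located precisely where the real work lies. But to turn your proposal into a proof you must either derive (A), (B), (C) from $2$-uniformity and the AH axioms --- the relevant structure theory of uniform Hjelmslev planes is in \cite{drake70} and \cite{HVM1989} --- or cite it explicitly. As submitted, the central step is acknowledged but absent, so the argument has a genuine gap.
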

The proof is similar to that of Theorem \ref{thm:proof} and is omitted.

Affine planes are sub-geometries of projective planes, the same is sometimes true for affine Hjelmslev planes.
\begin{lem}\cite{drake74}A $(t,r)PH$-plane can be truncated to a $(t,r)AH$- plane.
\end{lem}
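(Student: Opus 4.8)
The plan is to imitate the classical passage from a projective plane to an affine plane --- delete a line and everything on it --- but carried out neighbourhood-by-neighbourhood. Write $\phi\colon\mathcal{H}\to\mathcal{P}$ for the epimorphism of Definition \ref{PH} from the given $(t,r)PH$-plane $\mathcal{H}$ onto a projective plane of order $r$. Fix a line $\bar h$ of $\mathcal{P}$; let $\Lambda$ be the line-neighbourhood $\phi^{-1}(\bar h)$ of $\mathcal{H}$, and let $\Pi$ be the union of the $r+1$ point-neighbourhoods $\phi^{-1}(\bar P)$ taken over the points $\bar P$ of $\bar h$. I would take $\mathcal{H}'$ to be the incidence structure whose points are the points of $\mathcal{H}$ not in $\Pi$, whose lines are the lines of $\mathcal{H}$ not in $\Lambda$ (each surviving line $g$ now carrying the point set $g\setminus\Pi$), with incidence and neighbour relations inherited from $\mathcal{H}$; this is the truncation, and the goal is to verify it is a $(t,r)AH$-plane.

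First I would fix the parameters. If $g\notin\Lambda$ then $\bar g=\phi(g)$ is a line of $\mathcal{P}$ distinct from $\bar h$, so $\bar g$ meets $\bar h$ in exactly one point; hence exactly one of the $r+1$ point-neighbourhoods met by $g$ lies in $\Pi$, and it contributes exactly $t$ of the $t(r+1)$ points of $g$. Thus every surviving line retains $rt$ points, arranged as before in blocks of $t$ mutually neighbouring points, so $\mathcal{H}'$ has the parameters of a $(t,r)AH$-plane. Axiom \ref{anlines} of the affine Hjelmslev definition is then inherited verbatim from axiom \ref{nlines} of $\mathcal{H}$, since two lines sharing two points of $\mathcal{H}'$ share those two points in $\mathcal{H}$. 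For axiom \ref{apoints} I would use that, by axiom \ref{nlines}, all lines of $\mathcal{H}$ through a fixed pair of points $P,Q$ lie in a single line-neighbourhood, whose $\mathcal{P}$-image is a line through $\phi(P)$ when $P\sim Q$ and is the line $\overline{\phi(P)\phi(Q)}$ otherwise; in both cases that image contains $\phi(P)$, which differs from every line of $\bar h$ as soon as $P\notin\Pi$, so the neighbourhood is not $\Lambda$, at least one connecting line survives the truncation, and it still carries $P$ and $Q$.

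Finally I would produce the epimorphism. Put $\mathcal{A}=\mathcal{P}\setminus\bar h$, the affine plane of order $r$, and let $\psi$ be the restriction of $\phi$ to $\mathcal{H}'$; well-definedness (points and lines of $\mathcal{H}'$ map off $\bar h$), surjectivity (a point or line of $\mathcal{A}$ has a $\phi$-preimage, which then lies in $\mathcal{H}'$), and incidence-preservation are routine, and conditions 3(a), 3(b) of axiom \ref{aepi} descend directly from axiom \ref{epi}. The step I expect to be the main obstacle is condition 3(c): let $g,h$ be surviving lines with $g\cap h=\emptyset$ in $\mathcal{H}'$. If $g\sim h$ then $\psi(g)=\psi(h)$ and there is nothing to check; if $g\not\sim h$, then by axioms \ref{lines} and \ref{nlines} the lines $g$ and $h$ meet in exactly one point $R$ of $\mathcal{H}$, which must lie in $\Pi$ (else it would survive in $g\cap h$), so $\phi(R)\in\bar h$; since $\phi(R)$ is the unique common point in $\mathcal{P}$ of the distinct lines $\phi(g)$ and $\phi(h)$, those lines have no common point off $\bar h$, so $\psi(g)$ and $\psi(h)$ are parallel in $\mathcal{A}$. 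Pinning down that the single intersection point of $g$ and $h$ must map into $\bar h$ --- which comes down to two distinct projective lines meeting in exactly one point --- is the crux; the rest is bookkeeping, after which the parameter count identifies $\mathcal{H}'$ as the desired $(t,r)AH$-plane.
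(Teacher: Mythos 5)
Your construction is exactly the one the paper uses: delete one line-neighbourhood $\Lambda=\phi^{-1}(\bar h)$ together with all points incident with its lines (which is precisely your $\Pi$, since every point over a point of $\bar h$ lies on some line of $\Lambda$). The paper states this in one line and omits all verification, so your proposal is the same approach with the routine checks written out correctly.
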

\begin{proof}
Take a $(t,r)PH$- plane and remove all the lines of one line-neighbourhood together with all incident points.
\end{proof}

However not all affine Hjelmslev planes may be extended to a projective Hjelmslev plane \cite{Bacon74}.  As a corollary of Theorems \ref{thm:proof} and \ref{thm:affineproof} we have a new proof that $(t,t)AH$- planes are well behaved in this respect.
\begin{cor}\cite{Bacon74}
All $(t,t)AH$- planes are a sub-geometry of a $(t,t)PH$- plane.
\end{cor}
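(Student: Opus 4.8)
The plan is to show that every $(t,t)$-AH plane $\mathcal{A}$ embeds into some $(t,t)$-PH plane $\mathcal{H}$ as a sub-geometry, by using the two constructive theorems already proved. By Theorem \ref{thm:affineproof}, $\mathcal{A}$ is the output of Algorithm \ref{alg:affine} applied to some affine plane $\mathcal{A}^\ast$ of order $t$, a list of point-neighbourhood affine planes $\mathcal{A}_0,\dots,\mathcal{A}_{t^2+t}$ of order $t$, and a list of orthogonal arrays $\mathcal{O}_0,\dots,\mathcal{O}_{t^2+t}$ of type $OA(2,t,t)$. The idea is to complete each of these ingredients to the ingredients required by Algorithm \ref{alg:complete}: extend $\mathcal{A}^\ast$ to a projective plane $\mathcal{P}$ of order $t$ by adjoining a line at infinity; keep the same point-neighbourhood affine planes $\mathcal{A}_P$; and extend each $OA(2,t,t)$ to an $OA(2,t+1,t)$ by adjoining one further column (this column exists precisely because an affine plane of order $t$ has $t+1$ mutually orthogonal parallel classes, equivalently an $OA(2,t,t)$ always extends to an $OA(2,t+1,t)$). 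Feeding these completed ingredients to Algorithm \ref{alg:complete} produces, by Theorem \ref{thm:proof}, a $2$-uniform $(t,t)$-PH plane $\mathcal{H}$.

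Next I would check that $\mathcal{A}$ sits inside $\mathcal{H}$ as the truncation described in the Lemma. The line at infinity of $\mathcal{P}$, after the point-blow-up of Step 2, becomes a single line-neighbourhood of $\mathcal{H}$ consisting of the $t^2$ lines built from the extra column of each extended orthogonal array together with all points in the $t+1$ point-neighbourhoods $\bar{P}$ for $P$ on the line at infinity. Deleting exactly this line-neighbourhood and all its incident points from $\mathcal{H}$ removes precisely the point-neighbourhoods over the line at infinity and the lines of $\mathcal{H}$ coming from the added column, leaving the point-neighbourhoods over the affine points of $\mathcal{A}^\ast$ and the lines built from the original $t$ columns of each $\mathcal{O}_l$ — which is, by inspection, exactly the incidence structure produced by Algorithm \ref{alg:affine} on the original ingredients, namely $\mathcal{A}$. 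Hence $\mathcal{A}$ is the truncation of $\mathcal{H}$ along a line-neighbourhood, i.e.\ a sub-geometry, and since $\mathcal{H}$ is a $(t,t)$-PH plane the corollary follows.

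I expect the main obstacle to be the bookkeeping in matching the truncated structure to Algorithm \ref{alg:affine}'s output: one must verify that the choices of parallel classes and orthogonal-array symbol labellings used inside Algorithm \ref{alg:complete} when restricted to the $t$ retained columns agree with a valid run of Algorithm \ref{alg:affine}, and in particular that the "different parallel class each time a point is reused" condition is inherited correctly. A secondary point worth stating cleanly is the existence of the column extension $OA(2,t,t)\to OA(2,t+1,t)$; this is standard (it is equivalent to completing the $t$ parallel classes of a net to the full affine plane of order $t$, which is automatic here since the $\mathcal{A}_P$ are genuine affine planes), but it should be cited or noted rather than glossed over. None of this requires new ideas beyond the two constructions already in hand, so the proof is essentially an unwinding of definitions.
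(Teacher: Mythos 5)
Your argument is correct and is essentially the paper's proof: the paper likewise extends each $OA(2,t,t)$ to an $OA(2,t+1,t)$ (citing \cite[\S III Thm 3.95]{CRC}) and concludes that every $(t,t)$-AH plane extends to a $(t,t)$-PH plane; you have simply spelled out the bookkeeping (extending $\mathcal{A}^\ast$ to a projective plane, rerunning Algorithm \ref{alg:complete}, and truncating along the line-neighbourhood over the line at infinity) that the paper leaves implicit. One small correction: the column extension $OA(2,t,t)\to OA(2,t+1,t)$ is the net-completion (deficiency-one) theorem and is a property of the arrays $\mathcal{O}_l$ themselves, not a consequence of the point-neighbourhood planes $\mathcal{A}_P$ being genuine affine planes, so it should be justified by the citation rather than by your parenthetical remark.
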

\begin{proof}
All orthogonal arrays $OA(2,m,m)$ are extendible to $OA(2,m+1,m)$ \cite[\S III Thm 3.95]{CRC}, and hence all $(t,t)AH$- planes are extendible to $(t,t,)PH$- planes.
\end{proof}

\section{\label{fab}Conclusions}
For orders where there are several possible projective planes, affine planes and orthogonal arrays, these algorithms generate many different Hjelmslev planes of the same size.  The problem of determining if two planes are isomorphic is also computational intensive.  Even with the same seeds, there are an enormous number of choices to be made in step 3, resulting in an explosion of the number of planes generated.    With no cut downs, $(m^2+m+1)!(m+1)!m!m!$ projective Hjelmslev planes can be generated using Algorithm \ref{alg:complete}.  When there is an algebraic structure on the plane, the automorphism group can be very large \cite{KKK2011}.  Thus  a large number  of the planes generated by this algorithm are isomorphic.

Further analysis of the choices made in step 3 may reduce the number of isomorphic planes generated.  Some ground breaking work is needed on automorphisms of Hjelmslev planes to reduce this to a sub factorial number.  As mentioned before there has been little research on isomorphisms of Hjelmslev planes.  There are some results concerning isomorphism classes of Hjelmslev planes of uniformity $3$ or greater \cite{HVM1989}, however isomorphisms of $2$-uniform Hjelmslev planes appears to be a more difficult problem.

 This algorithm has been implemented in Python by  Jesse Waechter-Cornwill with a visual interpretation.
See \url{http://joannelhall.com/gallery/hjelmslev/}

There is also scope for extending this algorithm to affine Hjelmslev planes of higher uniformity by using the $2$-uniform affine Hjelmslev plane generated in Algorithm \ref{alg:affine}, and the corresponding array structure as inputs in Algorithm \ref{alg:complete}.

The material presented here is the final form and has not been published elsewhere.

\begin{acknowledgement}
Thanks to Jesse Waechter-Cornwill for the coding and visualisation of  algorithm 1.  Thanks are due to Cathy Baker for highlighting reference \cite{HVM1989}.
\end{acknowledgement}

\bibliographystyle{spmpsci}
\bibliography{referencelist}

\end{document}